\documentclass[11pt]{amsart}
\usepackage{a4wide,amssymb}

\textheight=610pt

\newcommand{\GEN}{\mathsf{G_{EN}}}
\newcommand{\SGEN}{\mathsf{\dot G_{EN}}}
\newcommand{\GNE}{\mathsf{G_{NE}}}
\newcommand{\BM}{\mathsf{BM}}
\newcommand{\w}{\omega}
\newcommand{\Ra}{\Rightarrow}

\newtheorem{theorem}{Theorem}
\newtheorem{problem}{Problem}
\newtheorem{proposition}{Proposition}

\theoremstyle{definition}

\newtheorem{remark}{Remark}

\title{Some Baire category properties of topological groups}
\author{Taras Banakh and Olena Hryniv}
\address{T.Banakh: Institute of Mathematics, Jan Kocahnowski University in Kielce (Poland) and Ivan Franko National University of Lviv (Ukraine)}
\email{t.o.banakh@gmail.com}
\address{O. Hryniv: Ivan Franko National University of Lviv, Ukraine}
\email{ohryniv@gmail.com}
\keywords{topological group, Baire space, Choquet space, strong Choquet space, Choquet game}
\subjclass{22A05, 54E52}
\dedicatory{Dedicated to the 60th birthday of our teacher M.M. Zarichnyi}

\begin{document}
\begin{abstract} We present several known and new results on the Baire category properties in topological groups. In particular, we prove that a Baire topological group $X$ is metrizable if and only if $X$ is point-cosmic if and only if $X$ is a $\sigma$-space. A topological group $X$ is Choquet if and only if its Raikov completion $\bar X$ is Choquet and $X$ is $G_\delta$-dense in $\bar X$. A topological group $X$ is complete-metrizable if and only if  $X$ is a point-cosmic Choquet space if and only if $X$ is a Choquet $\sigma$-space.  Finally, we pose several open problem, in particular, whether each Choquet topological group is strong Choquet.
\end{abstract}

\maketitle

In this paper we collect some results on Baire category properties of topological groups and pose some open problems.

The best known among Baire category properties are the meagerness and Baireness defined as follows.

A topological space $X$ is defined to be 
\begin{itemize}
\item {\em meager} if it can be written as the countable union of closed subsets with empty interior in $X$;
\item {\em Baire} if for any sequence $(U_n)_{n\in\w}$ of open dense subsets fo $X$ the intersection $\bigcap_{n\in\w} U_n$ is dense in $X$.
\end{itemize}
It is well-known that a topological space is Baire if and only if it contains no non-empty open meager subspace. This fact and the topological homogeneity of topological groups imply the following known characterization, see \cite[9.8]{Ke}.

\begin{proposition}\label{p:M} A topological group is Baire if and only if it is not meager.
\end{proposition}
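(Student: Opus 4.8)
The plan is to use the stated equivalence --- that a topological space is Baire if and only if it contains no non-empty open meager subspace --- together with the homogeneity of topological groups under translations.

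First I would prove the easy direction: if $X$ is Baire, then $X$ is not meager. Indeed, $X$ is a non-empty open subset of itself, so if $X$ were meager, it would be a non-empty open meager subspace, contradicting the Baire property.

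For the converse, suppose $X$ is not Baire. Then, by the cited characterization, $X$ contains a non-empty open meager subset $U$. Pick a point $x\in U$. For every $y\in X$, the left translation $\ell_{yx^{-1}}\colon X\to X$, $z\mapsto yx^{-1}z$, is a homeomorphism carrying $x$ to $y$ and $U$ to an open meager neighborhood $yx^{-1}U$ of $y$ (meagerness is a topological invariant). Since $X$ is a topological group, it is covered by countably many translates of $U$? --- this is the only slightly delicate point: in general a topological group need not be Lindel\"of, so I cannot literally cover $X$ by countably many translates of $U$. Instead I would argue as follows. The family $\{gU : g\in X\}$ is an open cover of $X$ by meager sets, but a union of meager sets is meager only if the union is countable. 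So the right move is: write $U=\bigcup_{n\in\w}F_n$ with each $F_n$ closed in $X$ with empty interior; then $X=\bigcup_{g\in X}gU$, and I want to conclude $X$ is meager.

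The clean way to finish --- and the step I expect to be the main obstacle --- is to reduce to a countable subcover using a maximal disjoint-type argument, or more simply to invoke that a topological group with a non-empty open meager subset is itself meager: choose a maximal subset $D\subseteq X$ such that the translates $\{dU^{*} : d\in D\}$ are pairwise disjoint for a suitable symmetric open $U^{*}$ with $x\in U^{*}$ and $U^{*}U^{*-1}\subseteq U x^{-1}$ (shrinking $U$ around $x$ first); maximality gives $X=\bigcup_{d\in D}dU^{*}U^{*-1}x\subseteq\bigcup_{d\in D}dUx^{-1}x$... and here each $d U$ is meager, but $D$ may be uncountable, so this still does not immediately give meagerness. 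The correct resolution, which I would adopt, is simpler: it is a standard fact (and presumably what \cite[9.8]{Ke} records) that in a topological group \emph{every} open subspace is homeomorphic to an open subspace whose translates cover the group, and that if some non-empty open set is meager then so is the whole group --- formally, one shows the set of points having a meager neighborhood is open, closed, and non-empty, hence all of $X$, and then $X$ is meager because "meager" for groups is equivalent to "first category at some (hence every) point" and a group that is first category at every point is first category (this last implication uses that meagerness is preserved by the group's own translations and a Banach-category-theorem style localization). So concretely the steps are: (1) easy direction via the open-meager characterization; (2) for the converse, produce a non-empty open meager $U$; (3) translate $U$ to see every point has a meager neighborhood; (4) apply the localization principle (Banach Category Theorem: the union of any family of open meager sets is meager) to conclude $X=\bigcup_{g\in X}gU$ is meager. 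Step (4), invoking the Banach Category Theorem, is the real content and the step most likely to need care, though it is entirely standard.
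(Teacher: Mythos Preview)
Your final approach is correct, and it is essentially an expansion of what the paper does: the paper gives no detailed argument at all, merely pointing to the characterization ``Baire $\Leftrightarrow$ no non-empty open meager subspace'' together with topological homogeneity, and citing Kechris. Your steps (1)--(4) fill this in properly, and you are right that step (4) --- passing from ``every point has an open meager neighborhood'' to ``$X$ is meager'' --- requires the Banach Category Theorem (the union of \emph{any} family of open meager sets is meager). The paper's one-line sketch suppresses exactly this step, so you have correctly located the only nontrivial ingredient.

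A remark on presentation: the detours through countable covers and maximal disjoint families are dead ends, as you yourself note, and should be deleted from a final write-up. The clean argument is simply: if $X$ is not Baire, pick a non-empty open meager $U$; by homogeneity every $gU$ is open and meager; $X=\bigcup_{g\in X}gU$ is then meager by the Banach Category Theorem. (If you prefer to avoid citing Banach Category, the maximal-disjoint idea \emph{can} be salvaged: choose symmetric open $W$ with $W^2\subset U$, take $D$ maximal with $\{dW:d\in D\}$ pairwise disjoint, so $X=DW^2\subset DU$; then for each nowhere dense $F_n\subset W$ the disjointness of the $dW$ forces $\bigcup_{d\in D}dF_n$ to be nowhere dense, whence $DW=\bigcup_n\bigcup_d dF_n$ is meager --- but one still needs to handle the gap between $DW$ and $DU$, and at that point you are essentially reproving Banach Category in a special case.)
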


In framework of topological groups, the Baireness combined with a suitable network property often implies the metrizability.

A family $\mathcal N$ of subsets of a topological space $X$ is called
\begin{itemize}
\item a {\em network} at a point $x\in X$ if for any neighborhood $O_x\subset X$ of $x$ the union $\bigcup\{N\in\mathcal N:N\subset O_x\}$ is a neighborhood of $x$;
\item a {\em network} if $\mathcal N$ is a network at each point.
\end{itemize}

A regular topological space $X$ is called
\begin{itemize}
\item {\em point-cosmic} if $X$ has a countable network $\mathcal N_x$ at each point $x\in X$;
\item {\em cosmic} if $X$ has a countable network;
\item a {\em $\sigma$-space} of $X$ has a $\sigma$-discrete network.
\end{itemize}
It is easy to see that each first-countable regular space is point-cosmic, and each cosmic space is both point-cosmic and a $\sigma$-space. The sequential fan $S_{\w_1}$ with uncountably many spikes is an example of a $\sigma$-space, which is not point-cosmic. More information on cosmic and $\sigma$-spaces can be found in \cite[\S 4]{Grue}.

\begin{theorem}\label{t:B} For a Baire topological group $X$ the following conditions are equivalent:
\begin{enumerate}
\item $X$ is metrizable;
\item $X$ is point-cosmic;
\item $X$ is a $\sigma$-space.
\end{enumerate}
\end{theorem}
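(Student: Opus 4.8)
\section*{Proof proposal for Theorem~\ref{t:B}}

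The plan is to make everything hinge on the Birkhoff--Kakutani metrization theorem, which says that for a (Hausdorff) topological group the conditions ``metrizable'', ``first countable'', ``$\{e\}$ is $G_\delta$'' and ``the diagonal is $G_\delta$'' are all equivalent. Thus $(1)$ will be reached by producing a countable neighbourhood base at the identity $e$. The second tool is the Pettis/Steinhaus phenomenon together with Proposition~\ref{p:M}: since a Baire group is non-meager, every non-empty open subset is non-meager, and so for every set $N$ with $\mathrm{int}(N)\neq\varnothing$ the product $\mathrm{int}(N)\cdot\mathrm{int}(N)^{-1}$ is an \emph{open neighbourhood} of $e$; moreover if $N\subseteq V$ for a symmetric neighbourhood $V$ of $e$, then $\mathrm{int}(N)\cdot\mathrm{int}(N)^{-1}\subseteq VV$. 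These two facts will be the whole engine. The implications $(1)\Rightarrow(2)$ and $(1)\Rightarrow(3)$ are immediate: a metrizable space is first countable, hence point-cosmic; and by the Bing--Nagata--Smirnov theorem a metrizable space has a $\sigma$-discrete base, which is in particular a $\sigma$-discrete network, so $X$ is a $\sigma$-space. It remains to prove $(2)\Rightarrow(1)$ and $(3)\Rightarrow(1)$.

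For $(2)\Rightarrow(1)$ I would argue as follows. Fix a countable network $\mathcal N=\{N_n:n\in\w\}$ at $e$; replacing each $N_n$ by its closure (which is legitimate since $X$ is regular) we may assume all $N_n$ are closed. Given any symmetric open neighbourhood $V$ of $e$, the set $O_V:=\mathrm{int}\big(\bigcup\{N_n:N_n\subseteq V\}\big)$ is a non-empty open subset of $X$ containing $e$, and $O_V=\bigcup\{O_V\cap N_n:N_n\subseteq V\}$ is a countable union of closed subsets of $O_V$. Since $X$ is Baire, $O_V$ is not meager, so some $O_V\cap N_{n_0}$ has non-empty interior; as $O_V$ is open, this gives $\mathrm{int}(N_{n_0})\neq\varnothing$ with $N_{n_0}\subseteq V$. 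Now for each $n$ with $\mathrm{int}(N_n)\neq\varnothing$ put $B_n:=\mathrm{int}(N_n)\cdot\mathrm{int}(N_n)^{-1}$, an open neighbourhood of $e$ by the Pettis fact; the family $\{B_n\}$ is countable. Given an arbitrary neighbourhood $U$ of $e$, choose a symmetric open $V$ with $VV\subseteq U$, find $N_{n_0}\subseteq V$ with non-empty interior as above, and note $B_{n_0}\subseteq VV^{-1}=VV\subseteq U$. Hence $\{B_n\}$ is a countable neighbourhood base at $e$, and Birkhoff--Kakutani finishes it.

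For $(3)\Rightarrow(1)$ I would run the same scheme, exploiting that unions of \emph{discrete} families of closed sets are closed. Take a $\sigma$-discrete network $\mathcal N=\bigcup_{k\in\w}\mathcal N_k$ with all members closed. For a symmetric neighbourhood $V$ of $e$ the set $\bigcup\{N\in\mathcal N:N\subseteq V\}$ contains an open set $O\ni e$, and $O=\bigcup_k(O\cap M_k)$ with $M_k:=\bigcup\{N\in\mathcal N_k:N\subseteq V\}$ closed; Baireness of $O$ produces $k$ with $\mathrm{int}(M_k)\neq\varnothing$, and discreteness of $\mathcal N_k$ upgrades this to a single member $N\in\mathcal N_k$ with $N\subseteq V$ and $\mathrm{int}(N)\neq\varnothing$. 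The same computation as before shows that for every symmetric $V$ there is an open neighbourhood $\mathrm{int}(N)\cdot\mathrm{int}(N)^{-1}\subseteq VV$ of $e$ drawn from the $\sigma$-discrete family of open sets $\mathcal P:=\{\mathrm{int}(N):N\in\mathcal N,\ \mathrm{int}(N)\neq\varnothing\}$; in particular $\mathcal P$ is a $\sigma$-discrete $\pi$-base for $X$ and, via Pettis, $\{PP^{-1}:P\in\mathcal P\}$ is a neighbourhood base at $e$. The \textbf{main obstacle} is that $\mathcal P$ is in general uncountable, so this last family need not be countable; the missing ingredient is the lemma that a non-meager topological group with a $\sigma$-disjoint $\pi$-base has countable $\pi$-character at $e$ (here one uses that each layer $\mathcal P_k$ is locally finite, together with the homogeneity of $X$, to reduce the uncountably many $\pi$-base elements inside small neighbourhoods of $e$ to countably many). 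Granting that lemma, one more application of the Pettis trick (if $\{P_m:m\in\w\}$ is a countable local $\pi$-base at $e$, then $\{P_mP_m^{-1}:m\in\w\}$ is a countable neighbourhood base at $e$) yields first countability, hence metrizability by Birkhoff--Kakutani. I expect the proof of that reduction lemma — and not the category arguments above — to be the delicate point.
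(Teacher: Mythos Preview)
Your argument for $(2)\Rightarrow(1)$ is correct and is essentially the paper's own proof: the paper uses the sets $(N^{-1}\bar N)^\circ$ where you use $\mathrm{int}(N)\cdot\mathrm{int}(N)^{-1}$, but the Baire-category step --- some member of the network contained in $V$ fails to be nowhere dense, hence has non-empty interior --- is identical, and both constructions yield a countable neighbourhood base at $e$. (Incidentally, you do not need Pettis here at all: for a non-empty \emph{open} set $W$ the product $WW^{-1}$ is trivially an open neighbourhood of $e$; Pettis would only be needed if $W$ were merely non-meager with the Baire property.)

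For $(3)\Rightarrow(1)$, however, your proof is genuinely incomplete, and you correctly identify the gap. The lemma you isolate --- that a Baire topological group with a $\sigma$-discrete $\pi$-base has countable $\pi$-character at $e$ --- is not obvious, and you give no argument for it; the appeal to ``homogeneity'' and ``local finiteness of each layer'' does not suffice, because members of a single discrete layer $\mathcal P_k$ may lie far from $e$ and produce pairwise distinct neighbourhoods $PP^{-1}$, so there is no evident mechanism collapsing them to a countable family. The paper sidesteps this entirely by taking a different route: it invokes an external result (Theorems~5.4 and~7.1 of \cite{BanP}) asserting that every Baire $\sigma$-space contains a dense metrizable subspace $M$; regularity of $X$ then gives first-countability at each point of $M$, homogeneity propagates this to all of $X$, and Birkhoff--Kakutani finishes. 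Your missing lemma is, in effect, at least as hard as that cited theorem, so you should either supply a proof of it or adopt the paper's strategy.
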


\begin{proof}  The implications $(1)\Ra(2,3)$ are trivial (or well-known). 
\smallskip

$(3)\Ra(1)$ Assume that $X$ is a $\sigma$-space. By Theorems 5.4 and 7.1 in \cite{BanP}, the Baire $\sigma$-space $X$ contains a dense metrizable subspace $M\subset X$. The regularity of $X$ implies that $X$ is first-countable at each point $x\in M$. Being topologically homogeneous, the topological group $X$ is first-countable and by   Birkhoff-Kakutani Theorem \cite[9.1]{Ke}, $X$ is metrizable. 
\smallskip

$(2)\Ra(1)$  Assume that the topological group $X$ is point-cosmic and hence $X$ has a countable network $\mathcal N$ at the unit $e$. For every $N\in\mathcal N$ let $(N^{-1}\bar N)^\circ$ be the interior of the set $N^{-1}\bar N=\{x^{-1}y:x\in N,\;y\in\bar N\}$ in $X$. Here $\bar N$ stands for the closure of the set $N$ in $X$. 

We claim that the family
$$\mathcal B:=\{(N^{-1}\bar N)^\circ:N\in\mathcal N,\;\;e\in(N^{-1}\bar N)^\circ\}$$is a countable neighborhood base at the unit $e$ of the group $X$. Given any neighborhood $U\subset X$ of $e$, find an open neighborhood $V\subset X$ of $e$ such that $V^{-1}VV^{-1}\subset U$. By the definition of a network at the point $e$, the union $\bigcup\mathcal N_V$ of the subfamily $\mathcal N_V:=\{N\in\mathcal N:N\subset V\}$ contains some open neighborhood $O_e$ of $e$ in $X$. Since $O_e$ is Baire (as an open subset of the Baire space $X$), some set $N\in\mathcal N_V$ is not nowhere dense in $V$. Consequently, its closure $\bar N$ contains a non-empty open subset $W$ of $X$. Since $W\subset \bar N$, the intersection $W\cap N$ contains some point $w$. Then $w^{-1}W$ is an open neighborhood of $e$ such that $$w^{-1}W\subset N^{-1}\bar{N}\subset V^{-1}\bar V\subset V^{-1}VV^{-1}\subset U.$$ Consequently, $w^{-1}W\subset (N^{-1}\bar N)^\circ$ and $(N^{-1}\bar N)\in\mathcal B$.

Being first-countable, the topological group $X$ is metrizable by the Birkhoff-Kakutani Theorem \cite[9.1]{Ke}. 
\end{proof}

Both Baire and meager spaces have nice game characterizations due to Oxtoby \cite{Oxtoby}. To present these characterizations, we need to recall the definitions of two infinite games $\GEN(X)$ and $\GNE(X)$ played by two players $\mathsf E$ and $\mathsf N$ (abbreviated from $\mathsf{Empty}$ and $\mathsf{Non}$-$\mathsf{Empty}$) on a topological space $X$.

The game $\GEN(X)$ is started by the player $\mathsf E$ who selects a non-empty open set $U_0\subset X$. Then player $\mathsf N$ responds selecting a non-empty open set $U_1\subset U_0$. At the $n$-th inning the player $\mathsf E$ selects a non-empty open set $U_{2n}\subset U_{2n-1}$ and the player $\mathsf N$ responds selecting a non-empty open set $U_{2n+1}\subset U_{2n}$. At the end of the game, the player $\mathsf E$ is declared the winner if $\bigcap_{n\in\w}U_n$ is empty. In the opposite case the player $\mathsf N$ wins the game $\GEN(X)$.

The game $\GNE(X)$ differs from the game $\GEN(X)$ by the order of the players. The  game $\GNE(X)$ is started by the player $\mathsf N$ who selects a non-empty open set $U_0\subset X$. Then player $\mathsf E$ responds selecting a non-empty open set $U_1\subset U_0$. At the $n$-th inning the player $\mathsf N$ selects a non-empty open set $U_{2n}\subset U_{2n-1}$ and the player $\mathsf E$ responds selecting a non-empty open set $U_{2n+1}\subset U_{2n}$. At the end of the game, the player $\mathsf E$ is declared the winner if $\bigcap_{n\in\w}U_n$ is empty. In the opposite case the player $\mathsf N$ wins the game $\GNE(X)$.

The following classical characterization can be found in  \cite{Oxtoby}.

\begin{theorem}[Oxtoby]\label{t:Oxtoby} A topological space $X$ is 
\begin{itemize}
\item meager if and only if the player $\mathsf E$ has a winning strategy in the game $\GNE(X)$;
\item Baire if and only if the player $\mathsf E$ has no winning strategy in the game $\GEN(X)$.
\end{itemize}
\end{theorem}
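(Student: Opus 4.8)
The plan is to prove Oxtoby's theorem by treating each of its four implications separately, following the classical Banach--Mazur strategy argument. Two of them --- meagerness of $X$ $\Ra$ a winning strategy for $\mathsf E$ in $\GNE(X)$, and failure of the Baire property $\Ra$ a winning strategy for $\mathsf E$ in $\GEN(X)$ --- are obtained by writing down an explicit strategy that ``dodges'' a prescribed sequence of nowhere dense sets. The other two --- a winning strategy for $\mathsf E$ $\Ra$ the corresponding smallness property --- require a recursive construction of a tree of open sets consistent with the strategy, together with Zorn's Lemma.

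\emph{The easy implications.} If $X=\bigcup_{n\in\w}F_n$ with each $F_n$ closed and nowhere dense, let $\mathsf E$ respond, after $\mathsf N$'s move $U_{2n}$ in $\GNE(X)$, with a nonempty open set $U_{2n+1}\subseteq U_{2n}\setminus F_n$ (nonempty because $F_n$ is nowhere dense); then $\bigcap_{k\in\w}U_k\subseteq\bigcap_{n\in\w}(X\setminus F_n)=\emptyset$, so $\mathsf E$ wins. Dually, if $X$ is not Baire, fix a nonempty open meager subspace $W$, write $W=\bigcup_{n\in\w}F_n$ with each $F_n$ nowhere dense, let $\mathsf E$ open $\GEN(X)$ with $U_0:=W$, and thereafter respond to $\mathsf N$'s move $U_{2n+1}$ with a nonempty open $U_{2n+2}\subseteq U_{2n+1}\setminus F_n$; then $\bigcap_{k\in\w}U_k\subseteq W\setminus\bigcup_{n\in\w}F_n=\emptyset$, so $\mathsf E$ wins $\GEN(X)$.

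\emph{The converse implications.} Suppose $\mathsf E$ has a winning strategy $\sigma$ in $\GNE(X)$; the case of $\GEN(X)$ is parallel once we fix the forced opening move $U_0:=\sigma(\emptyset)$ and relativize everything that follows to the open subspace $U_0$. I would build recursively, for $n\ge 1$, a family $\mathcal P_n$ of finite plays consistent with $\sigma$ and ending with a move of $\mathsf E$, so that (i) the final open sets of distinct members of $\mathcal P_n$ are pairwise disjoint; (ii) each member of $\mathcal P_n$ extends a member of $\mathcal P_{n-1}$ (taking $\mathcal P_0$ to be the one-element family containing the empty play, whose final open set is declared to be $X$); and (iii) $D_n$, the union of the final open sets of the members of $\mathcal P_n$, is open and dense in $X$. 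The recursive step is where Zorn's Lemma enters: given $p\in\mathcal P_{n-1}$ with final open set $W_p$, choose by Zorn a \emph{maximal} family $\mathcal V_p$ of nonempty open subsets $V\subseteq W_p$ whose responses $\sigma(p{}^\frown V)$ (the strategy applied to $p$ followed by the move $V$) are pairwise disjoint. Maximality forces $\bigcup_{V\in\mathcal V_p}\sigma(p{}^\frown V)$ to be dense in $W_p$, since a nonempty open $O\subseteq W_p$ disjoint from all these responses could be adjoined to $\mathcal V_p$ (note $\sigma(p{}^\frown O)\subseteq O$). Then $\mathcal P_n:=\{\,p{}^\frown V{}^\frown\sigma(p{}^\frown V):p\in\mathcal P_{n-1},\ V\in\mathcal V_p\,\}$ satisfies (i)--(iii) by an easy induction, with (iii) holding because $D_n$ is dense in $D_{n-1}$ and $D_{n-1}$ is dense in $X$.

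The crux is then to check that $\bigcap_{n\ge1}D_n=\emptyset$. Given $x$ in this intersection, (i) lets me pick, for every $n$, the unique member $p_n\in\mathcal P_n$ whose final open set contains $x$; then (ii) forces $p_1\subsetneq p_2\subsetneq\cdots$ (the member of $\mathcal P_{n-1}$ that $p_n$ extends must be $p_{n-1}$ itself, by (i)), so these cohere into one infinite play $(U_0,U_1,\dots)$ consistent with $\sigma$ with $x$ in the final open set of each $p_n$; since the $U_k$ are nested this yields $x\in\bigcap_{k\in\w}U_k$, so $\mathsf N$ wins this play --- contradicting that $\sigma$ is a winning strategy for $\mathsf E$. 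Hence $X=\bigcup_{n\ge1}(X\setminus D_n)$ is a countable union of closed sets with empty interior, i.e.\ $X$ is meager. In the $\GEN(X)$ case the identical computation gives $\bigcap_{n}D_n=\emptyset$ with every $D_n$ open and dense in $U_0$, so $U_0=\bigcup_{n\ge1}(U_0\setminus D_n)$ is a nonempty open meager subspace of $X$, whence $X$ is not Baire by the criterion recalled before Proposition~\ref{p:M}. The only genuinely delicate point should be the bookkeeping of the recursion --- maintaining (i)--(iii) simultaneously, and in particular verifying the coherence in (ii) that lets one reconstruct the infinite play; the two ``easy'' implications and the Zorn's Lemma step are routine.
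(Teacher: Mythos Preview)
Your proof is correct and is precisely the classical Banach--Mazur argument due to Oxtoby. Note, however, that the paper does not itself prove this theorem: it is stated as a known characterization with a reference to Oxtoby's original article, so there is no in-paper proof to compare against. What you have written is essentially the argument one finds by following that citation, and the bookkeeping you flag (maintaining conditions (i)--(iii) and reconstructing the coherent infinite play) is handled correctly.
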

A topological space $X$ is defined to be {\em Choquet} if the player $\mathsf N$ has a winning strategy in the Choquet game $\GEN(X)$. Choquet spaces were introduced in 1975 by White \cite{White} who called them {\em weakly $\alpha$-favorable spaces}.

Oxtoby's Theorem~\ref{t:Oxtoby} implies that $$\mbox{Choquet $\Ra$ Baire $\Ra$ non-meager}.$$

Let us also recall the notion of a strong Choquet space, which is defined using a modification $\SGEN(X)$ of the Choquet game $\GEN(X)$, called the strong Choquet game. 

The game $\SGEN(X)$ is played by two players, $\mathsf E$ and $\mathsf N$ on a topological space $X$. The player $\mathsf E$ starts the game selecting a open set $U_0\subset X$ and a point $x_0\in U_0$. Then the player $\mathsf N$ responds selecting an open neighborhood $U_1\subset U_0$ of $x_0$. At the $n$-th inning the player $\mathsf E$ selects an open set $U_{2n}\subset U_{2n-1}$ and a point $x_n\in U_{2n}$ and player $\mathsf N$ responds selecting a neighborhood $U_{2n+1}\subset U_{2n}$ of $x_n$. At the end of the game the player $\mathsf E$ is declared the winner if the intersection $\bigcap_{n\in\w}U_n$ is empty. Otherwise the player $\mathsf N$ wins the game $\SGEN(X)$. 

A topological space $X$ is called {\em strong Choquet} if the player $\mathsf N$ has a winning strategy in the game $\SGEN(X)$. More information on (strong) Choquet spaces can be found in \cite[8.CD]{Ke}. Various topological games are analyzed in \cite{Gru} and \cite{Tel}. 

It is known (and easy to see) that the class of strong Choquet spaces includes all spaces that are homeomorphic to complete metric spaces. Such spaces will be called {\em complete-metrizable}.

So, for any topological space $X$ we have the implications
$$\mbox{complete-metrizable $\Ra$ strong Choquet $\Ra$ Choquet $\Ra$ Baire $\Ra$ non-meager}.$$

By \cite{Choq} and \cite{Oxtoby} (see also \cite[8.17]{Ke}), a metrizable space is
\begin{itemize}
\item strong Choquet if and only if it is complete-metrizable;
\item Choquet if and only if it contains a dense complete-metrizable subspace.
\end{itemize}

In the following theorem we present a characterization of Choquet topological groups. A subset $A$ of a topological space $X$ is called {\em $G_\delta$-dense} in $X$ if $A$ has non-empty intersection with any non-empty $G_\delta$-set $G\subset X$.

\begin{theorem}\label{t:C} A topological group $X$ is Choquet if and only if its Raikov completion $\bar X$ is Choquet and $X$ is $G_\delta$-dense in $\bar X$.
\end{theorem}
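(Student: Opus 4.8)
The plan is to prove both implications separately, using the Raikov completion $\bar X$ (a complete topological group containing $X$ as a dense subgroup) and transferring winning strategies in the Choquet game between $X$ and $\bar X$ via the density and the group structure.

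For the ``only if'' direction, suppose $X$ is Choquet, so player $\mathsf N$ has a winning strategy $\sigma$ in $\GEN(X)$. I would first show $\bar X$ is Choquet: given a play of $\GEN(\bar X)$ in which $\mathsf E$ produces open sets $U_{2n}\subset\bar X$, intersect each with $X$ to get nonempty (by density) open subsets of $X$, feed them to $\sigma$, obtaining responses $V_{2n+1}\subset X$ open in $X$; lift each $V_{2n+1}$ to an open set $\tilde V_{2n+1}\subset\bar X$ with $\tilde V_{2n+1}\cap X=V_{2n+1}$, and have $\mathsf N$ play $U_{2n+1}:=\tilde V_{2n+1}\cap U_{2n}$. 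Since $\sigma$ wins, $\bigcap_n V_{2n+1}\ne\emptyset$; the point obtained lies in $\bigcap_n U_n$ as computed in $X$, but one must check this intersection is still nonempty as a subset of $\bar X$ — this is the point where I expect to need care, and the standard trick is to shrink the $\mathsf N$-moves so that the closures $\overline{U_{2n+1}}^{\,\bar X}$ nest, forcing the intersection to be nonempty by completeness (or directly: the point of $\bigcap V_{2n+1}\subset X\subset\bar X$ witnesses nonemptiness). Next, $G_\delta$-density of $X$ in $\bar X$: given a nonempty $G_\delta$-set $G=\bigcap_n W_n\subset\bar X$ with $W_n$ open, pick a point $z\in G$; since $\bar X$ is a topological group one may refine to $W_n$ with $\overline{W_{n+1}}\subset W_n$ and $\bigcap_n W_n\ni z$, then run $\GEN(\bar X\cap X)=\GEN(X)$ where $\mathsf E$ plays inside the $W_n$'s (using density to keep moves nonempty in $X$) while $\mathsf N$ follows $\sigma$; the resulting nonempty intersection lies in $X\cap\bigcap_n W_n\subset X\cap G$.

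For the ``if'' direction, assume $\bar X$ is Choquet with $\mathsf N$-strategy $\sigma$ and $X$ is $G_\delta$-dense in $\bar X$. I would have $\mathsf N$ play in $\GEN(X)$ as follows: interpret $\mathsf E$'s moves $U_{2n}\subset X$, choose open $\tilde U_{2n}\subset\bar X$ with $\tilde U_{2n}\cap X=U_{2n}$ and also shrink so that $\overline{\tilde U_{2n}}^{\,\bar X}\subset$ (the previous $\mathsf N$-move in $\bar X$), apply $\sigma$ in $\bar X$ to get $\tilde U_{2n+1}$, and let $\mathsf N$ respond in $X$ with $U_{2n+1}:=\tilde U_{2n+1}\cap X$ (nonempty by density). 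Then $\bigcap_n\tilde U_n\ne\emptyset$ in $\bar X$ because $\sigma$ wins; arranging the $\tilde U_n$ to be a decreasing sequence of open sets whose closures also decrease makes $\bigcap_n\tilde U_n=\bigcap_n\overline{\tilde U_n}$ a nonempty $G_\delta$-set in $\bar X$, so by $G_\delta$-density it meets $X$, and the intersection point lies in $\bigcap_n U_n$. Hence $\mathsf N$ wins in $X$ and $X$ is Choquet.

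The main obstacle in both directions is the same subtlety: a nonempty intersection of the completed moves in $\bar X$ does not automatically produce a point of $X$, and conversely a point of $X$ in the intersection is automatic but one must ensure the $\mathsf N$-strategy's winning condition (nonemptiness of the intersection of \emph{all} moves, including $\mathsf E$'s) is genuinely met. Both issues are handled by the classical device of interleaving a shrinking of moves so that successive closures nest — legitimate because $X$ (being Choquet, hence Baire, hence a topological group that is Baire) is regular, and because in a topological group one can always pass from an open neighborhood to a smaller one with closure inside it. I would isolate this shrinking as a preliminary remark so that both halves of the proof can invoke it cleanly; the $G_\delta$-density hypothesis is then exactly what converts ``nonempty $G_\delta$ in $\bar X$'' into ``nonempty in $X$'', closing the argument.
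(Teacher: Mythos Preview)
Your ``if'' direction and your argument that $\bar X$ is Choquet are correct and essentially match the paper (which dismisses both as easy). The gap is in your argument for $G_\delta$-density of $X$ in $\bar X$.

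As written, your argument for $G_\delta$-density uses nothing about the group structure beyond regularity of $\bar X$: you take a nonempty $G_\delta$-set $G=\bigcap_n W_n$ with nested closures, and propose to run $\GEN(X)$ with $\mathsf E$ playing inside the $W_n$'s while $\mathsf N$ follows the winning strategy $\sigma$. But $\mathsf E$'s move at stage $n$ must be a nonempty open subset of $\mathsf N$'s previous move $U_{2n-1}$, and there is no reason for $U_{2n-1}\cap W_n$ to be nonempty: $\sigma$ may have steered the play into a part of $W_{n-1}$ disjoint from $W_n$. If this step worked, it would show that \emph{every} dense Choquet subspace of a regular space is $G_\delta$-dense, which is false---the irrationals form a dense Choquet (indeed Polish) subspace of $\mathbb R$ that misses the $G_\delta$-set $\{0\}$.

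The paper's proof of $G_\delta$-density genuinely uses the group structure. Assuming some $\bar x\in\bar X$ has a $G_\delta$-neighborhood missing $X$, one builds a decreasing sequence of unit neighborhoods $\Gamma_n$ with $\Gamma_n^{-1}\Gamma_n\subset\Gamma_{n-1}$ so that $\Gamma:=\bigcap_n\Gamma_n$ is a subgroup and $\bar x\Gamma\cap X=\emptyset$, whence $X\Gamma\cap X\bar x=\emptyset$. Now both $X$ and its translate $X\bar x$ are dense Choquet in $\bar X$, so $\mathsf N$ wins both Banach--Mazur games $\BM(\bar X,X)$ and $\BM(\bar X,X\bar x)$; interleaving the two strategies (and shrinking moves into cosets $x_n\Gamma_n$ with $x_n\in X$) produces a decreasing sequence of open sets whose intersection meets both $X$ and $X\bar x$ yet lies in a single coset $x\Gamma\subset X\Gamma$, contradicting $X\Gamma\cap X\bar x=\emptyset$. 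The translate $X\bar x$ and the coset trick are exactly what your sketch is missing.
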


\begin{proof} The ``if'' part easily follows from the definitions of the Choquet game and the $G_\delta$-density of $X$ in $\bar X$. 

To prove the ``only if'' part, assume that a topological group $X$ is Choquet. Then its Raikov completion $\bar X$ also is Choquet (since $\bar X$ contains a dense Choquet subspace). It remains to prove that $X$ is $G_\delta$-dense in $\bar X$. Assuming that this is not true, we could find a point $\bar x\in \bar X$ and a sequence $(O_n)_{n\in\w}$ of neighborhoods of $\bar x$ such that $X\cap \bigcap_{n\in\w}O_n=\emptyset$. Choose a decreasing sequence $(\Gamma_n)_{n\in\w}$ of open neighborhoods of the unit in $\bar X$ such that $\Gamma_n^{-1}\Gamma_n\subset \Gamma_{n-1}$, and $\bar x\Gamma_n\subset O_n$ for every $n\in\w$. Then the intersection $\Gamma:=\bigcap_{n\in\w}\Gamma_n$ is a subgroup in $\bar X$ such that the set
$$g\Gamma=\bigcap_{n\in\w}g\Gamma_n\subset \bigcap_{n\in\w}O_n$$is disjoint with the subgroup $X$. This implies $X\cap X\bar x\Gamma=\emptyset$ and $X\Gamma\cap X\bar x=\emptyset$. 

Given a subset $A\subset \bar X$ consider the Banach-Mazur game $\BM(\bar X,A)$ played by two players, $\mathsf E$ and $\mathsf N$ according to the following rules. The player $\mathsf E$ starts the game selecting a non-empty open set $U_0\subset \bar X$ and the player $\mathsf N$ responds selecting a non-empty open set $U_1\subset U_0$. At the $n$-th inning the player $\mathsf E$ chooses a non-empty open set $U_{2n}\subset U_{2n-1}$ and the player $\mathsf N$ responds selecting a non-empty open set $U_{2n+1}\subset U_{2n}$. At the end of the game the player $\mathsf E$ is declared the winner if the intersection $A\cap\bigcap_{n\in\w}U_n$ is empty. In the opposite case the player $\mathsf N$ wins the game.

It is easy to see that for any dense Choquet subspace $A\subset \bar X$ the player $\mathsf N$ has a winning strategy in the Banach-Mazur game $\BM(\bar X,A)$. Moreover, at $n$th inning the player $\mathsf N$ can replace the set $U_{2n}$ selected by the player $\mathsf E$ by a set $U_{2n}'\subset U_{2n}$ so small that $U'_{2n}\subset x_n\Gamma_n$ for some $x_n\in X$. Then the set $U_{2n+1}$ chosen by player $\mathsf N$ according to his/her strategy will be contained in $x_n\Gamma_n$.

In particular, the player $\mathsf N$ has a winning strategy in the game $\BM(\bar X,X)$ having this smalleness property. By the same reason, the player $\mathsf N$ has a winning strategy in the Banach-Mazur game $\BM(\bar X,X\bar x)$. Now the players $\mathsf E$ and $\mathsf N$ can use these winning strategies in the games $\BM(X,H)$ and $\BM(X,Hg)$ to construct a decreasing sequence $(U_n)_{n\in\w}$ of non-empty open sets in $\bar X$ such that the intersection $\bigcap_{n\in\w}U_n$ meets both sets $X$ and $X\bar x$. Moreover, in the $n$th inning the player $\mathsf N$ can make his/sets $U_{2n}$ so small that $U_{2n}\subset x_n\Gamma_n$ for some $x_n\in X$. Choose two points $x\in H\cap \bigcap_{n\in\w}U_n$ and $y\in Hg\cap\bigcap_{n\in\w}U_n$. Then for every $n\in\w$ we have $x\in U_{2n}\subset x_n\Gamma_n$ and hence $x_n\in x\Gamma_n^{-1}$ and $U_{2n}\subset x_n\Gamma_n\subset x\Gamma_n^{-1}\Gamma_n\subset x\Gamma_{n-1}$.
Consequently, $y\in \bigcap_{n\in\mathbb N}U_{2n}\subset\bigcap_{n\in\mathbb N}x\Gamma_{n-1}=x\Gamma$ and hence $y\in X\bar x\cap x\Gamma\subset X\bar x\cap X\Gamma=\emptyset$, which is a desired contradiction.
\end{proof}

It is easy to see that each $G_\delta$-dense subspace of a strong Choquet space is strong Choquet.
 
\begin{problem} Is the Raikov-completion of a strong Choquet topological group strong Choquet?
\end{problem}

Using Theorems~\ref{t:B} and \ref{t:C} we can prove the following characterization of complete-metrizable topological groups.

\begin{theorem}\label{t:CM} For a topological group $X$ the following conditions are equivalent:
\begin{enumerate}
\item $X$ is complete-metrizable;
\item $X$ is a point-cosmic Choquet space;
\item $X$ is Choquet $\sigma$-space.
\end{enumerate}
\end{theorem}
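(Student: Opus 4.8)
The plan is to derive the equivalences from the two characterization theorems already proved, using the obvious implications among Baire category properties. The implications $(1)\Ra(2)$ and $(1)\Ra(3)$ are straightforward: a complete-metrizable space is strong Choquet, hence Choquet, and a metrizable space is both point-cosmic (being first-countable) and a $\sigma$-space (being metrizable, it has a $\sigma$-discrete base, which is in particular a $\sigma$-discrete network). So the only real content is in the reverse implications $(2)\Ra(1)$ and $(3)\Ra(1)$.

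For $(2)\Ra(1)$, suppose $X$ is a point-cosmic Choquet topological group. Since Choquet implies Baire, $X$ is a Baire topological group that is point-cosmic, so Theorem~\ref{t:B} gives that $X$ is metrizable. Similarly, for $(3)\Ra(1)$, if $X$ is a Choquet $\sigma$-space, then $X$ is a Baire $\sigma$-space, so again by Theorem~\ref{t:B} the group $X$ is metrizable. Thus in both cases $X$ is a metrizable Choquet topological group, and it remains to upgrade ``metrizable Choquet'' to ``complete-metrizable''.

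This last upgrade is where I would invoke the metrizable case recalled in the excerpt: by \cite{Choq} and \cite{Oxtoby}, a metrizable Choquet space contains a dense complete-metrizable subspace $G\subset X$. A dense completely metrizable subspace of a metrizable space is $G_\delta$ in that space (this is the classical Mazurkiewicz/Alexandrov characterization), so $G$ is a dense $G_\delta$ subgroup-candidate — but $G$ need not be a subgroup. Here I would use the topological-group structure: since $G$ is a dense $G_\delta$ in the metrizable group $X$, the set $G\cap gG$ is a dense $G_\delta$ for each $g$ in a dense set, and a standard argument shows $X$ itself is $G_\delta$ in its completion, hence Polish-in-its-own-right; more cleanly, one shows that a metrizable group containing a dense complete-metrizable subspace must be complete-metrizable, because $X$ is $G_\delta$-dense in its Raikov completion $\bar X$ by Theorem~\ref{t:C} and simultaneously $G_\delta$ (being the union of translates of $G$, or by a direct Baire-category argument on $G\cap Gg^{-1}$), forcing $X=\bar X$.

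The main obstacle is precisely this last point: passing from ``contains a dense complete-metrizable subspace'' to ``is itself complete-metrizable'' for a topological group. The cleanest route is to combine Theorem~\ref{t:C} with the metrizable-Choquet fact: $X$ metrizable and Choquet gives, via its Raikov completion $\bar X$ (which is Choquet and metrizable, hence contains a dense complete-metrizable subspace, hence is itself $G_\delta$ in any metric completion and in fact complete-metrizable since a metrizable group that is Choquet and complete as a uniform space is Polish), that $X$ is $G_\delta$-dense in the complete-metrizable group $\bar X$; but a $G_\delta$-dense $G_\delta$ subset equals the whole space, and $X$ is $G_\delta$ in $\bar X$ because $X$, being Choquet and metrizable, is $G_\delta$ in its completion by the classical theorem. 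Hence $X=\bar X$ is complete-metrizable. I would write this paragraph carefully, making explicit that a metrizable topological group which is complete in its two-sided uniformity is its own Raikov completion, so that ``$X$ is metrizable and $G_\delta$ in $\bar X$'' already yields completeness once we know $\bar X$ is metrizable — and $\bar X$ is metrizable because the Raikov completion of a metrizable group is metrizable.
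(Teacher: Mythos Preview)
Your overall architecture matches the paper's exactly: use Theorem~\ref{t:B} to get metrizability, then Theorem~\ref{t:C} to get that $X$ is $G_\delta$-dense in its Raikov completion $\bar X$, and finally conclude $X=\bar X$. The first two steps are fine. The trouble is entirely in your ``upgrade'' paragraph.

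You try to finish by showing that $X$ is a $G_\delta$ subset of $\bar X$, invoking ``the classical theorem'' that a Choquet metrizable space is $G_\delta$ in its completion. That statement is false. The classical dichotomy you quoted earlier says only that a metrizable space is Choquet iff it contains a dense complete-metrizable subspace; it does \emph{not} say the space itself is $G_\delta$ in its completion (that would be equivalent to strong Choquet, i.e.\ complete-metrizability, which is exactly what you are trying to prove). For a concrete counterexample in the purely metric setting: take the Cantor space $P=2^\omega$, a closed nowhere dense copy $C\subset P$ of $2^\omega$, and a Bernstein set $B\subset C$; then $X=(P\setminus C)\cup B$ contains the dense open (hence complete-metrizable) set $P\setminus C$, so $X$ is Choquet, yet $P\setminus X=C\setminus B$ is not $F_\sigma$, so $X$ is not $G_\delta$ in $P$ and hence not complete-metrizable. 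So your step (3) is circular for groups and outright wrong for general metric spaces; the detour through ``$X$ is $G_\delta$ in $\bar X$'' cannot be completed this way.

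The paper avoids this entirely with a one-line observation you never make: once $\bar X$ is metrizable, every singleton $\{y\}\subset\bar X$ is a $G_\delta$-set. Since $X$ is $G_\delta$-dense in $\bar X$ (Theorem~\ref{t:C}), each such singleton meets $X$, i.e.\ $y\in X$; hence $X=\bar X$ and $X$ is complete-metrizable. No need to show $X$ itself is $G_\delta$, and no appeal to the Choquet/Oxtoby characterization of metrizable Choquet spaces at all. Replace your final paragraph with this and the proof is complete.
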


\begin{proof}  The implications $(1)\Ra(2,3)$ are trivial.

$(2,3)\Ra(1)$ Assuming that a Choquet topological group $X$ is point-cosmic or a $\sigma$-space, we can apply Theorem~\ref{t:B} and conclude that the Choquet (and hence Baire) topological group $X$ is metrizable. Then its Raikov completion $\bar X$ is a complete-metrizable topological group, see \cite[9.A]{Ke}. By Theorem~\ref{t:C}, $X$ is $G_\delta$-dense in $\bar X$. Since each seingleton in $\bar X$ is a $G_\delta$-set, the $G_\delta$-density of $X$ in $\bar X$ implies that $X=\bar X$ and hence $X$ is a complete-metrizable (and complete) topological group. 
\end{proof}

\begin{problem}\label{prob1} Is each Choquet topological group strong Choquet?
\end{problem}

\begin{remark} By Theorem~\ref{t:CM}, the answer to Problem~\ref{prob1} is affirmative for topological groups which are point-cosmic or $\sigma$-spaces.
\end{remark}

It is known \cite[8.13]{Ke} (and \cite[8.16]{Ke}) that the product of two (strong) Choquet space is (strong) Choquet. On the other hand, the product $X\times Y$ of two Baire spaces can be meager, see \cite{vMP}. 

\begin{problem} Let $H$ be a closed normal subgroup of a topological group $G$ and $G/H$ be the quotient topological group. 
Assume that $H$ and $G/H$ are (strong) Choquet. Is $G$ (strong) Choquet?
\end{problem}

\noindent{\bf Acknowledgement.} The authors would like to thank 
Gabriel Andre Asmat Medina for clarifying the situation with the product of countably cellular Baire spaces.

\end{document}